\def\Z{\mathbb{Z}}
\def\N{\mathbb{N}}
\def\tleq{\trianglelefteq}
\setlist{itemsep=0pt,topsep=\parsep}
\theoremstyle{plain}
\newtheorem{theorem}{Theorem}[section]
\newtheorem{lemma}[theorem]{Lemma}
\theoremstyle{definition}
\newtheorem{definition}[theorem]{Definition}
\newtheorem{remark}[theorem]{Remark}
\newtheorem{example}[theorem]{Example}
\newcommand\extrafootnote[1]{
    \begingroup
    \renewcommand\thefootnote{}\footnote{#1}
    \addtocounter{footnote}{-1}
    \endgroup
}
\newcommand{\NZ}{\ensuremath{\langle N, Z(G) \rangle}}
\newcommand{\GAN}{\ensuremath{G = A N}}
\newcommand{\prog}[1]{{\sffamily #1}}
\begin{document}
\title{The commuting graphs of certain cyclic-by-abelian groups}
\author{Timo Velten}
\date{November 2024}
\dedication{In memory of Michael Herrmann.}
\maketitle

\begin{abstract}
Let \(G\) be a finite, non-abelian group of the form \(\GAN\),
where \(A \leq G\) is abelian, and \(N \tleq G\) is cyclic. We prove that
the commuting graph \(\Gamma(G)\) of \(G\) is either a connected graph of diameter at most four,
or the disjoint union of \(|G'| + 1\) complete graphs. These results apply to all finite metacyclic groups,
and to groups of square-free order in particular.
\end{abstract}
\extrafootnote{I am grateful to Bettina Eick for introducing me to this topic during my Bachelor's thesis
and her support since then. I thank E. A. O'Brien for many helpful comments.}

\section{Introduction}

The commuting graph \(\Gamma(G)\) of a non-abelian finite group \(G\) is the simple
graph with vertex set \(V = \{g \in G \mid g \notin Z(G)\}\)
and vertices \(u, v \in V\) are adjacent if and only
if \(u\) and \(v\) commute in \(G\).
\citet{Iranmanesh2008} proved that the commuting graph
of a finite symmetric or alternating group is either
disconnected, or has diameter at most five. % TODO: any
They conjectured that the commuting graph of any finite non-abelian group is either disconnected,
or has diameter bounded above by a universal constant independent of the group.
This conjecture was disproved by \citet{GiudiciNoUpperBound}.

Nevertheless, the conjecture of \cite{Iranmanesh2008} is true for some restricted
classes of groups.
\citet{Parker2013} proved that the commuting graph
of a solvable group with trivial center is either disconnected or has diameter at most eight.
\citet{MorganParker} proved that if a group has trivial center, then every connected
component of its commuting graph has diameter at most 10.
The commuting graph of \(\mathrm{GL}_n(\Z / m \Z)\)
is either disconnected or has diameter at most eight \cite{GiudiciIntegers}.
Recently, Carleton and Lewis \cite{Lewis2024} considered
groups with abelian Sylow subgroups;
in particular, they prove that if such a group has derived length 2
and its commuting graph is connected, then the graph has diameter at most 4.

Here we consider the following groups.

\begin{definition}\label{def:ca_factorization}
A finite group \(G\) has a
cyclic-by-abelian factorization
if there exist a normal, cyclic subgroup \(N \tleq G\) and an abelian subgroup
\(A \leq G\) such that \(\GAN\).
\end{definition}

This definition generalizes metacyclic groups. If \(G\) is metacyclic with \(G / N = \langle a N \rangle\)
and \(N\) both cyclic, then \(G\) has the cyclic-by-abelian factorization \(G = A N\), where \(A := \langle a \rangle\).

It is well-known that all groups of square-free order are metacyclic \cite[Chapter V, \mbox{§3}, Theorem 11]{zbMATH01445102}.
Therefore, the class of groups that have a cyclic-by-abelian factorization includes
all groups of square-free order.

We prove the following theorem on the structure of the commuting graphs
of groups that have a cyclic-by-abelian factorization. It extends
the results of \cite{Velten2021}.

\begin{theorem}\label[theorem]{thm:main_thm}
Let \(G\) be a finite, non-abelian group with cyclic-by-abelian factorization \(\GAN\).
Let \(\Gamma\) be its commuting graph. Then the following statements hold.
\begin{enumerate}[(a)]
    \item\label{thm:main:connected_iff} The graph \(\Gamma\) is connected if and only if there exists a non-central \(g \in G\) such that \(C_{G}(g)\) is non-abelian.
    \item\label{thm:main:connected} If \(\Gamma\) is connected, then its diameter is at most four.
    \item\label{thm:main:disconnected} \begin{sloppypar}
        If \(\Gamma\) is disconnected, then it is the disjoint union of one complete graph
        on \mbox{\((|G'| - 1) |Z(G)|\)} and \(|G'|\) complete graphs,
        each on \mbox{\([G : G'] - |Z(G)|\)} vertices.
    \end{sloppypar}
\end{enumerate}
\end{theorem}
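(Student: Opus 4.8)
The plan is to exploit the metabelian structure forced by the factorization. First I would record the basic facts: since $G/N \cong A/(A\cap N)$ is abelian we have $G' \le N$, so $G'$ is cyclic and $G$ is metabelian; moreover $B := C_G(N)$ is an \emph{abelian} normal subgroup (because $N \le B$ gives $B = (A\cap B)N$ with $A\cap B$ abelian and centralizing $N$), it is in fact a maximal abelian subgroup (any abelian $M \ge B \ge N$ satisfies $C_G(M)\le C_G(N)=B$, forcing $M=B$), and $G/B$ embeds in the abelian group $\mathrm{Aut}(N)$. Writing $A_0 = A\cap Z(G) = C_A(N)$ and $N_0 = N\cap Z(G) = C_N(A)$, one checks $Z(G) = A_0 N_0$ and $B = A_0 N$. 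The arithmetic of a cyclic group then yields $|C_N(A)|\cdot|[A,N]| = |N|$, and since $[A,N]=G'$ this gives the key identity $|B| = |G'|\,|Z(G)|$. Note $N\not\le Z(G)$ (else $G=AN$ would be abelian), so the clique $B\setminus Z(G)$ is nonempty; it will serve as the backbone.

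For the disconnected case (c) and the forward direction of (a), I would first reduce to the property that every non-central element has abelian centralizer (commutativity is transitive). If this holds then adjacency is an equivalence relation on the vertex set and each component is a clique, namely $M\setminus Z(G)$ for a maximal abelian $M$; since $G$ is non-abelian there are at least two, so $\Gamma$ is disconnected. Conversely a non-abelian centralizer produces $u\sim g\sim v$ with $u\not\sim v$. The mechanism linking this to $B$ is: for $n\in N\setminus Z(G)$ one has $B\le C_G(n)$, and $C_G(n)$ is abelian iff $C_G(n)=B$; if some $g\notin B$ centralizes such an $n$, then $\langle B,g\rangle$ is non-abelian. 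Granting transitivity, the backbone clique has size $|B|-|Z(G)| = (|G'|-1)|Z(G)|$. For $g\notin B$ I would show $C_N(g)\le Z(G)$ (otherwise $C_G(n)$ is non-abelian for $n\in C_N(g)\setminus Z(G)$), whence $|[g,N]| = |N|/|C_N(g)| = |G'|$ and $[g,N]=G'$; thus the $N$-conjugates $\{g^n : n\in N\} = g[g,N] = gG'$ already fill the coset $gG'$, forcing the conjugacy class of $g$ to be exactly $gG'$ and $|C_G(g)| = [G:G']$. Hence each remaining clique $C_G(g)\setminus Z(G)$ has $[G:G']-|Z(G)|$ vertices; since the non-backbone vertices number $|G|-|B| = |G'|([G:G']-|Z(G)|)$, there are exactly $|G'|$ of them, giving (c).

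For the connected case (b) and the reverse direction of (a) I would again use the backbone $B\setminus Z(G)$, which is a single clique, and try to show every vertex lies within distance two of it; then any two vertices are joined by a path of length at most $2+1+2$, and I would sharpen the routing to length four. The easy vertices are $g\in B\setminus Z(G)$ (distance $0$) and those $g\notin B$ with $C_N(g)\not\le Z(G)$, which are adjacent to $N\setminus Z(G)\subseteq B\setminus Z(G)$ (distance $1$). A useful reduction for the rest is the power trick: if $d$ is the order of $gB$ in $G/B$ then $g^d\in B$, and if $g^d\notin Z(G)$ then $g\sim g^d$ gives distance $1$. This isolates the genuinely hard vertices: those $g$ with $C_B(g)=Z(G)$ and $\langle g\rangle\cap B\le Z(G)$, for which no neighbour in $B$ and no power in $B\setminus Z(G)$ is available.

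The main obstacle is precisely this last class together with pinning the diameter at four rather than five. Here I expect to need the full force of the cyclicity of $N$: the decisive step is to prove that a non-abelian centralizer cannot coexist with an isolated clique, i.e.\ that the hypothesis of (a) rules out the mixed behaviour in which one component branches while another stays a bare clique. Concretely, I would show that if $C_G(g)$ is non-abelian then its derived subgroup $[C_G(g),C_G(g)] \le G' \le B$ is non-central, so that $C_G(g)$ meets $B\setminus Z(G)$ and $g$ is adjacent to the backbone; propagating this through the conjugation action of $G$ on $N$ (using normality of $N$ and $B$) should force every maximal abelian subgroup to meet the backbone component, yielding connectivity. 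The class-two case, where $[C_G(g),C_G(g)]\le Z(G)$, is the delicate point and is exactly where I would exploit that $N$, and hence $G'$, is cyclic to exclude it. Finally, a careful analysis of the distance-two paths — replacing an intermediate vertex by a suitable power or $N$-translate lying in $B$ — should collapse the a priori bound of five to the asserted four.
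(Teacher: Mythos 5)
Your treatment of the disconnected half is correct, and at one point takes a genuinely different (and arguably cleaner) route than the paper: you identify the backbone as \(B = C_G(N) = N Z(G)\) with \(|B| = |G'|\,|Z(G)|\) (the paper's \Cref{basic_facts}\ref{basic:intersection}), and for \(g \notin B\) you get \(|C_G(g)| = [G:G']\) by observing that the \(N\)-conjugates of \(g\) already fill the coset \(gG'\), so the conjugacy class of \(g\) is exactly \(gG'\). This conjugacy-class count replaces the paper's explicit transversal computation in \Cref{basic_facts}\ref{basic:generator_case} and correctly yields part (c) and the forward direction of (a): if all centralizers of non-central elements are abelian, adjacency is transitive and \(\Gamma\) is a disjoint union of at least two cliques of the stated sizes.

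The reverse direction of (a) and all of part (b) --- the core of the theorem --- are, however, not proved: your text for this half is a plan (``I would show\dots'', ``should force\dots'', ``I expect\dots''), and its one concrete ingredient is false. It is not true that a non-abelian centralizer must have non-central derived subgroup. Take the metacyclic group \(G = \langle a, x \mid a^4 = x^{16} = 1,\ x^a = x^3 \rangle\) of order \(64\): here \(Z(G) = \langle x^8 \rangle\), the element \(x^2\) is non-central, and \(C_G(x^2) = \langle a^2, x\rangle\) is non-abelian with \([C_G(x^2), C_G(x^2)] = \langle x^8\rangle \leq Z(G)\). So the ``class-two case'' you hoped to exclude via cyclicity of \(N\) genuinely occurs. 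Moreover, your ``hard vertices'' can have \emph{abelian} centralizers --- in the paper's group \eqref{eq:group_with_con} with \((n,m) = (15,4)\), the vertex \(a\) has \(C_G(a) = \langle a \rangle\) abelian, \(C_B(a) = Z(G) = 1\) and \(\langle a \rangle \cap B = 1\) --- so attaching non-abelian centralizers to the backbone, even if it could be repaired, would not reach them, and the ``propagation through the conjugation action'' that is supposed to cover them is never specified. Finally, even granting that every vertex is within distance two of the backbone clique, your routing gives diameter \(5\), and the promised sharpening to \(4\) is missing.

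For comparison, the paper's Case 1 avoids all of this by routing through a single hub rather than through the clique \(B \setminus Z(G)\): by \Cref{lemma:equivalencies}, the existence of a non-abelian centralizer yields one non-central \(u^* \in N\) with \(C_G(u^*)\) strictly larger than \(N Z(G)\), and every vertex is shown to be within distance two of this one vertex, which immediately gives diameter at most four. If \([x,g]\) has order \(s < |G'|\), then \(x^s\) is a non-central common neighbour of \(g\) and \(u^*\); if \([x,g]\) generates \(G'\), then \(|C_G(g)| = [G:G']\) and the counting bound \(|C_G(g) \cap C_G(u^*)| \geq |C_G(g)|\,|C_G(u^*)|/|G| > |Z(G)|\) forces a non-central common neighbour. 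This counting argument for the hard vertices is precisely the idea your proposal lacks.
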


We demonstrate that both \ref{thm:main:connected} and \ref{thm:main:disconnected} occur.

\begin{example}
Consider the dihedral group  of order \(4n\):
\[D_{4n} = \langle a, x \mid  a^2 = x^{2 n} = 1, x^a = x^{-1} \rangle.\]
It has the cyclic-by-abelian factorization
\(A \ltimes N\) with \(A = \langle a \rangle\) and \(N = \langle x \rangle\). Also,
\(|Z(D_{4n})| = 2\) and \(|D_{4n}'| = n\). Since \(C_{D_{4n}}(u) = N\) for every
non-central \(u \in N\), \Cref{lemma:equivalencies} and the theorem imply that the commuting graph of \(D_{4n}\)
is the disjoint union of one complete graph on \(2 (n - 1)\) vertices and \(n\) complete graphs
on \(2\) vertices.
\end{example}

\begin{example}
Consider the group
\begin{equation}\label{eq:group_with_con}
    G = \langle a, x \mid a^m = x^n = 1, x^a = x^2 \rangle
\end{equation}
where \(n \geq 9\) is an odd multiple of \(3\) and \(m \in \N\) such that \(2^m \equiv 1 \pmod{n}\).
Then \(G = A N\) where \(A = \langle a \rangle\) and \(N = \langle x \rangle\).
It holds that \(N \cap Z(G) = \{1\}\). Moreover, if \(s = n / 3\), then
\((x^s)^{a^2} = x^{4 s} = x^{s}\) and \(C_{G}(x^s) \not\leq \NZ\).
By \Cref{lemma:equivalencies} and the theorem, the commuting graph of \(G\)
is connected and has diameter at most four.
% Note that some, but not all of these groups, are A-groups.
\end{example}

We used the computer algebra system \prog{GAP} \cite{GAP4},
the \textsc{Small Groups} library \cite{SmallGrp} and the \prog{GAP} package \prog{GRAPE} \cite{GRAPE}
to demonstrate that our results are sharp.

Various metacyclic groups have a commuting graph with diameter four, for example
\(\mathrm{SmallGroup}(60, 7)\), which is the group
defined in \eqref{eq:group_with_con} for \((n, m) = (15, 4)\). Also, \(\mathrm{SmallGroup}(96, 118)\)
has a cyclic-by-abelian factorization, is \textit{not} metacyclic and its commuting graph has diameter four.

If \(G\) has a cyclic-by-abelian factorization, then
\(G'\) is cyclic. However, this condition alone does not imply \Cref{thm:main_thm}.
For example, \(\mathrm{SmallGroup}(1040, 93)\) has a cyclic commutator
subgroup but its commuting graph has diameter five. Also, \(\mathrm{SmallGroup}(48, 15)\)
has a cyclic commutator subgroup and a disconnected commuting graph which
is not the disjoint union of complete graphs.

Following \cite{PerfectCommutingGraphs}, an AC-group is a group in which the centralizer
of every non-central element is abelian. \Cref{thm:main_thm}
asserts that a group that admits a cyclic-by-abelian factorization has a disconnected commuting
graph if and only if it is an AC-group. These were studied by \citet{Schmidt}; he refers to them as \(\mathfrak{M}\)-groups.

\section{Preliminary results and notation}

For
\(g, h \in G\) we write \(g^h := h^{-1} g h\) for conjugation and
\([g, h] := g^{-1} h^{-1} g h = g^{-1} g^h\) for their commutator. The centralizer
of \(g \in G\) is denoted by \(C_{G}(g)\), and the center of \(G\) by \(Z(G)\).

We will use the following basic result frequently.

\begin{remark}
Let \(G\) have an abelian normal subgroup \(N\). Then
\([u, g]^l = [u^l, g]\) for every \(u \in N\), \(g \in G\) and \(l \in \Z\).
\end{remark}
\begin{proof}
\([u, g]^l = (u^{-1} u^g)^l = u^{-l} (u^l)^g = [u^l, g]\).
\end{proof}

The following lemma collects properties of groups that have a cyclic-by-abelian factorization.

\begin{lemma}\label[lemma]{basic_facts}
Let \(G\) be a finite group with cyclic-by-abelian factorization \(\GAN\)
and \(N = \langle x \rangle\).
\begin{enumerate}[(i)]
\item\label{basic:commute_condition} Let \(u, v \in N\)
    and \(g, h \in G\) such that \(g\) and \(h\) commute. Then \(g u\) and \(h v\)
    commute if and only if \([u, h] = [v, g]\).
\item\label{basic:intersection} \(N \cap Z(G) = \langle x^{n} \rangle\) where \(n := |G'|\). In particular, \(|\NZ| = |G'| |Z(G)|\).
\item\label{basic:generator_case} If \([x, g]\) generates \(G'\), then \(|C_{G}(g)| = [G : G']\) and \(C_G(g)\) is abelian.
\end{enumerate}
\end{lemma}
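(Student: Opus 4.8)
The plan is to treat the three parts in turn, in each case pushing every computation down into the abelian normal subgroup \(N\) and exploiting that \(N = \langle x \rangle\) is cyclic.

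For (i) I would argue by a direct manipulation of the two products. Writing \(gu \cdot hv = gh\, u^{h} v\) (moving \(u\) past \(h\) via \(uh = h u^{h}\)) and \(hv \cdot gu = hg\, v^{g} u\), and then cancelling \(gh = hg\), the two products agree if and only if \(u^{h} v = v^{g} u\). Since \(u, v\) and their conjugates all lie in the abelian group \(N\), I can rearrange this to \(u^{h} u^{-1} = v^{g} v^{-1}\), which is exactly \([u,h] = [v,g]\). This step needs only that \(N\) is abelian and normal, so it is routine.

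For (ii) the key reduction is that an element \(x^{k} \in N\) automatically commutes with \(N\), so it is central precisely when it commutes with \(A\); by the preceding remark this means \([x,a]^{k} = [x^{k},a] = 1\) for every \(a \in A\). I would first identify the commutator subgroup: using the remark and that \(A\) is abelian, \([N,A] = \langle [x,a] : a \in A\rangle\) is normal and \(G/[N,A]\) is abelian, whence \(G' = [N,A] = \langle [x,a] : a \in A\rangle\), a cyclic group of order \(n\). The condition \([x,a]^{k} = 1\) for all \(a\) then says that raising to the \(k\)-th power kills every generator of \(G'\), hence kills all of \(G'\); as \(G'\) is cyclic of order \(n\) this holds exactly when \(n \mid k\). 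Therefore \(N \cap Z(G) = \langle x^{n}\rangle\), and the index statement follows from \(|\NZ| = |N|\,|Z(G)| / |N \cap Z(G)|\) together with \(|N \cap Z(G)| = |N|/n\) (legitimate since \(n = |G'|\) divides \(|N|\)).

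For (iii) I would split the claim into the order count and the abelianness. The order is the easier half: every conjugate of \(g\) lies in the coset \(gG'\) because \([g,y] \in G'\), while conjugating by powers of \(x\) gives \(g^{x^{j}} = g\,[x,g]^{-j}\), and since \([x,g]\) generates \(G'\) these already sweep out all of \(gG'\). Hence the class of \(g\) is exactly \(gG'\), of size \(n = |G'|\), so \(|C_G(g)| = [G:G']\). The abelianness is where I expect the real work. I would record the conjugation action as a homomorphism \(r \colon A \to (\Z/|N|\Z)^{\times}\) with \([x^{i},a] = x^{i(r(a)-1)}\); writing elements as \(a_{j} x^{i_{j}}\) one then computes \([a_{1}x^{i_{1}}, a_{2}x^{i_{2}}] = x^{\,i_{1}(r(a_{2})-1) - i_{2}(r(a_{1})-1)}\). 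With \(g = a_{0}x^{i_{0}}\), the hypothesis says \([x,g] = x^{r(a_{0})-1}\) generates \(G' = \langle x^{|N|/n}\rangle\). The obstacle is that, at first pass, the two centralizing conditions \([c,g] = [c',g] = 1\) only force the target \([c,c'] = 1\) modulo \(N \cap Z(G)\). To close this gap I would divide the relevant congruences by \(t := |N|/n\) — valid because every \(r(a_{j})-1\) is a multiple of \(t\), as \([x,a_{j}] \in G'\) — reducing everything to congruences modulo \(n\) in which the coefficient \((r(a_{0})-1)/t\) coming from \(g\) is a unit. Solving the two centralizing congruences for \(i_{1}\) and \(i_{2}\) modulo \(n\) and substituting then yields \([c,c'] = 1\). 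The main difficulty throughout (iii) is the mod-\(|N|\) versus mod-\(n\) bookkeeping, and in particular remembering to use that all of the \(r(a_{j})-1\) are divisible by \(t\).
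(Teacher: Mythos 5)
Your proof is correct, and for parts (i) and (ii) it is essentially the paper's own argument: the same expansion \((gu)(hv) = gh\,u^{h}v\) versus \((hv)(gu) = gh\,v^{g}u\) for (i), and for (ii) the same reduction of centrality of \(x^{k}\) to \([x^{k},a] = [x,a]^{k} = 1\) for all \(a \in A\), combined with the fact that \(G'\) is the cyclic subgroup generated by the \([x,a]\) (the paper passes through the normal closure of \(\{[x,a]\}\), you observe normality of \(\langle [x,a] \mid a \in A\rangle\) directly; both are fine). Part (iii) is where you take a genuinely different route, in both halves. For the order count, the paper sets \(B := \langle A, Z(G)\rangle\), shows \(\{1, x, \dots, x^{n-1}\}\) is a transversal of \(B\) in \(G\) (which rests on the small but nontrivial identity \(B \cap N = N \cap Z(G)\)), and proves that for each \(b \in B\) there is a unique exponent \(\ell(b)\) with \(bx^{\ell(b)} \in C_{G}(g)\), so that \(C_{G}(g) = \{bx^{\ell(b)} \mid b \in B\}\) has \(|B| = [G:G']\) elements. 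You instead show the conjugacy class of \(g\) is exactly the coset \(gG'\) (via \(g^{x^{j}} = g[x,g]^{-j}\), which sweeps out \(gG'\) because \([x,g]\) generates \(G'\)) and invoke orbit--stabilizer; this is shorter and avoids the transversal bookkeeping entirely. For abelianness, the paper reuses its parametrization and works with commutator identities inside the group (writing \([x,b] = [x,a]^{r_{b}}\), \([x,c] = [x,a]^{r_{c}}\) and checking that \([x,c]^{\ell(b)}\) and \([x,b]^{\ell(c)}\) both equal \([x,a]^{kr_{b}r_{c}}\)), whereas you linearize through the homomorphism \(r \colon A \to (\Z/|N|\Z)^{\times}\) and do congruence arithmetic. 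Your ``divide by \(t = |N|/n\)'' step is exactly right and does close the gap you flag: writing \(r(a_{j}) - 1 = t s_{j}\) (legitimate since \([x,a_{j}] \in G' = \langle x^{t}\rangle\)), the two centralizing conditions become \(i_{1}s_{0} \equiv i_{0}s_{1}\) and \(i_{2}s_{0} \equiv i_{0}s_{2} \pmod{n}\), where \(s_{0}\) is a unit modulo \(n\) because \([x,g] = x^{ts_{0}}\) generates \(G'\); eliminating \(i_{0}\) gives \(i_{1}s_{2} \equiv i_{2}s_{1} \pmod{n}\), which is \([c,c'] = 1\) exactly, not merely modulo \(N \cap Z(G)\). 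The trade-off: the paper's argument is coordinate-free and its two halves of (iii) share one construction, while yours decouples the two halves and makes the underlying mechanism --- invertibility of \((r(a_{0})-1)/t\) modulo \(n\) --- completely explicit.
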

\begin{proof}
\ref{basic:commute_condition} Note that
\[(g u) \cdot (h v) = (g h) (u^h v)
\quad \text{ and } \quad
(h v) \cdot (g u) = (h g) (v^g u).
\]
As \(g h = h g\), this implies that \(g u\) and \(h v\) commute
if and only if \(u^h v = v^g u\). Since \(N\) is abelian,
\([u, h] = u^{-1} u^h = v^{-1} v^g = [v, g]\).

\ref{basic:intersection} Let \(g \in G\). Then \(1 = [x, g]^n = [x^n, g]\), so \(x^n \in Z(G) \cap N\).
To show the other inclusion, suppose that \(x^s \in N \cap Z(G)\). Note that \(G'\)
is the normal closure
of commutators of the form \([x, a]\) for \(a \in A\), so
\(G' = \langle [x, a] \mid a \in A \rangle^G\).
For every \(a \in A\), it holds that \([x, a]^s = [x^s, a] = 1\). Therefore,
\(G'\) is generated by elements of order dividing \(s\). Moreover, \(G / N\) is abelian, so \(G' \leq N\)
is cyclic. From this, we conclude that \(n = |G'|\) must also divide \(s\), so \(x^s \in \langle x^n \rangle\).

\ref{basic:generator_case} Let \(B := \langle A, Z(G) \rangle\) and let \(T := \{1, x, \dots, x^{n - 1}\}\).
Then \(T\) is a transversal of the right cosets of \(N \cap Z(G) = B \cap N\) in \(G\),
and therefore also a transversal of the right cosets of \(B\) in \(G = B N\). In particular, every \(g \in G\)
can be uniquely written as \(g = b x^k\) with \(b \in B\) and \(0 \leq k < n\) and \(|G| = |B| |G'|\).

Now we determine \(C_{G}(g)\). Write \(g = a x^k\) with \(a \in B\) and \(0 \leq k < n\). Fix \(b \in B\).
We determine all integers \(\ell = \ell(b)\) such that \(b x^\ell\) commutes with \(g\). Because \(b\) and \(a\) commute,
we know from \Cref{basic_facts} that \(g\) and \(b x^\ell\) commute if and only if
\[[x^k, b] = [x^\ell, a] = [x, a]^{\ell} = [x, g]^{\ell}.\]
Since \([x, g]\) generates \(G'\), there is exactly one integer \(\ell\) that satisfies this equation.

This establishes the existence of a mapping \(\ell \colon B \to \{0, 1, \dots, n - 1\}\) such that
\[C_{G}(g) = \{b x^{\ell(b)} \mid b \in B\}.\]
These elements are distinct, so \(|C_{G}(g)| = |B|\). As \(|G| = |B| |G'|\),
it follows that \(|C_{G}(g)| = [ G: G']\).

We now prove that \(C_{G}(g)\) is abelian. Consider two elements \(b x^{\ell(b)}\)
and \(c x^{\ell(c)}\) of \(C_{G}(g)\).
It suffices to prove that \([x, c]^{\ell(b)} = [x, b]^{\ell(c)}\). As
both commute with \(g\)
\[[x, a]^{\ell(b)} = [x^k, b] \quad \text{ and } \quad [x, a]^{\ell(c)} = [x^k, c].\]
Furthermore, \([x, a] = [x, g]\) generates \(G'\), so there exist integers \(r_b\) and \(r_c\)
such that \([x, a]^{r_b} = [x, b]\) and \([x, a]^{r_c} = [x, c]\). We deduce that
\[[x, b]^{\ell(c)} = [x, a]^{r_b \ell(c)} = [x^k, c]^{r_b} = [x, c]^{k r_b} = [x, a]^{k r_b r_c}.\]
Similarly, \([x, c]^{\ell(b)} = [x, a]^{k r_b r_c}\) so
\([x, c]^{\ell(b)} = [x, b]^{\ell(c)}\). We conclude that \(b x^{\ell(b)}\) and \(c x^{\ell(c)}\) commute
and that \(C_{G}(g)\) is abelian.
\end{proof}

\begin{lemma}\label[lemma]{lemma:equivalencies}
Let \(G\) be a finite, non-abelian group with cyclic-by-abelian factorization \(\GAN\).
Then the following statements are equivalent.
\begin{enumerate}[(i)]
\item\label{equiv:N} \(C_{G}(u) \leq \NZ\) for every non-central \(u \in N\).
\item\label{equiv:abelian} \(C_{G}(g)\) is abelian for every non-central \(g \in G\).
\end{enumerate}
Moreover, if either of these statements hold, then for every non-central \(g \in G\)
either \(C_{G}(g) = \NZ\), or \(|C_{G}(g)| = [G : G']\),
or both hold.
\end{lemma}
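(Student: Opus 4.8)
The plan is to prove the equivalence by the two separate implications, and to obtain the final (``moreover'') assertion as a byproduct of the direction (i)~$\Rightarrow$~(ii). Throughout I write $n = |G'|$ and a general element as $g = a x^k$ with $a \in A$; since $[x,a]$ and $[u,a]$ lie in $G' \le N$ and are fixed under conjugation by $x^k \in N$, I get the two reductions $[x, g] = [x, a]$ and $[u, g] = [u, a]$ for all $u \in N$. Two preliminary facts will do most of the work. First, $\langle N, Z(G)\rangle = N Z(G)$ is abelian and contains both $N$ and $Z(G)$, hence sits inside $C_G(u)$ for every $u \in N$; so statement (i) is really the equality $C_G(u) = \langle N, Z(G)\rangle$. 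Second, I would establish the identity $C_G(x) = \langle N, Z(G)\rangle$: if $[x,g] = 1$ then the $A$-part $a$ centralizes both $x$ (hence $N$) and the abelian $A$, so $a \in Z(G)$ and $g \in N Z(G)$, and the reverse inclusion is immediate. In particular $g \in \langle N, Z(G)\rangle$ if and only if $[x,g] = 1$.

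For (ii)~$\Rightarrow$~(i): given a non-central $u \in N$, the subgroup $C_G(u)$ is abelian by hypothesis and contains the abelian $N$ (as $u \in N$). In an abelian group every element centralizes each subgroup it contains, so $C_G(u) \subseteq C_G(N) = C_G(x)$; the reverse inclusion $C_G(x) = C_G(N) \subseteq C_G(u)$ is trivial, giving $C_G(u) = C_G(x) = \langle N, Z(G)\rangle$, which is exactly (i).

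For (i)~$\Rightarrow$~(ii), together with the ``moreover'' claim, I would fix a non-central $g = a x^k$ and case-split on $c := [x, g] = [x, a] \in G'$. If $c$ generates $G'$, then part~\ref{basic:generator_case} of \Cref{basic_facts} immediately gives that $C_G(g)$ is abelian with $|C_G(g)| = [G:G']$. If $c = 1$, then $a \in Z(G)$, so $g \in \langle N, Z(G)\rangle$; writing $g = x^j z$ with $z$ central and noting $x^j$ is non-central (else $g$ would be central), statement (i) gives $C_G(g) = C_G(x^j) = \langle N, Z(G)\rangle$, which is abelian. This leaves the intermediate case $c \neq 1$ with $\langle c\rangle \subsetneq G'$, and the crux of the whole lemma is to show it cannot occur under (i).

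To rule it out, I would study the homomorphism $\phi\colon N \to G'$, $x^i \mapsto [x^i, a] = c^i$, whose image is $\langle c\rangle$ and whose kernel is $C_N(a)$; thus $|C_N(a)| = |N|/\mathrm{ord}(c)$. Since $G' \le N$ is cyclic of order $n$, part~\ref{basic:intersection} of \Cref{basic_facts} gives $N \cap Z(G) = \langle x^n\rangle$ of index $n$ in $N$, so $|N \cap Z(G)| = |N|/n$. As $c$ is a non-generator, $\mathrm{ord}(c) < n$, whence $|C_N(a)| > |N \cap Z(G)|$ and, since $N \cap Z(G) \le C_N(a)$, there is a $u \in C_N(a)$ that is non-central in $G$. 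For this $u$ one has $[u,g] = [u,a] = 1$, so $g \in C_G(u)$, yet $g \notin \langle N, Z(G)\rangle$ because $[x,g] = c \neq 1$; this contradicts (i). Hence every non-central $g$ lands in one of the first two cases, so $C_G(g)$ is always abelian --- this is (ii) --- and $C_G(g)$ equals $\langle N, Z(G)\rangle$ or has order $[G:G']$, which is the final assertion. I expect the counting step comparing $|C_N(a)|$ with $|N\cap Z(G)|$ to be the main obstacle, since it is precisely the strict inequality $\mathrm{ord}(c) < n$ that upgrades a ``partial'' centralizer into a genuine non-central $u \in N$ whose centralizer escapes $\langle N, Z(G)\rangle$; I would take care to verify the index computation $[N : N \cap Z(G)] = n$, which rests on $G'$ being cyclic of order $n$ inside $N$.
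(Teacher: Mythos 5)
Your proof is correct, and its skeleton is parallel to the paper's: the case where \([x,g]\) generates \(G'\) is delegated to part~\ref{basic:generator_case} of \Cref{basic_facts}, and all remaining centralizers get identified with \(\langle N, Z(G)\rangle\). Two steps are executed differently. For (ii) \(\Rightarrow\) (i) you argue directly: \(C_G(u)\) abelian and containing \(N\) forces \(C_G(u) \leq C_G(N) = C_G(x) = \langle N, Z(G)\rangle\). The paper proves the contrapositive instead, extracting from an element \(h = ay \in C_G(u) \setminus \langle N, Z(G)\rangle\) the non-abelian subgroup \(\langle a, N\rangle \leq C_G(u)\); the two are mirror images, and your preliminary identity \(C_G(x) = \langle N, Z(G)\rangle\) is what makes the direct version clean. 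The more substantial divergence is in (i) \(\Rightarrow\) (ii). The paper treats every non-generator \([x,g]\) (including \([x,g]=1\)) uniformly and constructively: if \(s < n\) is the order of \([x,g]\), then \(x^s\) is an explicit non-central element of \(N\) commuting with \(g\) (non-central by part~\ref{basic:intersection} of \Cref{basic_facts}), so (i) places \(g\) inside \(\langle N, Z(G)\rangle\), and writing \(g = yz\) with \(y \in N\), \(z \in Z(G)\) gives \(C_G(g) = C_G(y) = \langle N, Z(G)\rangle\). You instead isolate the intermediate case where \(c = [x,g]\) is a nontrivial non-generator and kill it by contradiction, producing the required non-central element of \(C_N(a)\) by a counting argument: the kernel of the homomorphism \(N \to G'\), \(u \mapsto [u,a]\), has order \(|N|/\mathrm{ord}(c) > |N|/n = |N \cap Z(G)|\). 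This costs an extra case and some machinery, but it records a fact the paper leaves implicit, namely that under (i) the commutator \([x,g]\) is always trivial or a generator of \(G'\); the paper's explicit element \(x^s\) reaches the same end more economically. Both arguments ultimately rest on the divisibility \(n \mid |N|\) (coming from \(G' \leq N\)), which you correctly flagged as the point needing care in the index computation.
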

\begin{proof}
\(\neg\ref{equiv:N} \implies \neg\ref{equiv:abelian}\): Let \(h \in C_{G}(x) \setminus \NZ\) with
\(x \in N\) non-central. Write \(h = a y\) with \(a \in A\) and \(y \in N\). Then \(a\) commutes with \(x\). Also, \(a\)
does not commute with every element of \(N\). If it did, then \(a\) would be central and
\(h\) would be in \(\NZ\). Therefore,
\(\langle a, N \rangle\) is a non-abelian subgroup of \(C_{G}(x)\) and \(C_{G}(x)\) is non-abelian.

\ref{equiv:N} \(\implies\) \ref{equiv:abelian}: Let \(g\) be non-central in \(G\). If
\([x, g]\) generates \(G'\), then \(C_{G}(g)\) is abelian and \(|C_{G}(g)| = [G : G']\) by \Cref{basic_facts}.

Now, suppose that \([x, g]\) does not generate \(G'\) and has order \(s < |G'|\).
Then \([x^s, g] = [x, g]^s = 1\) so \(x^s\) and \(g\) commute. Also, \(x^s\) is not central by \Cref{basic_facts}.
Therefore, \(g \in C_{G}(x^s) = \NZ\). Write \(g = y z\) with \(y \in N\) and \(z \in Z(G)\). Then
\(C_{G}(g) = C_{G}(y) = \NZ\). In particular, \(C_{G}(g)\) is abelian.
\end{proof}

\section{Proof of \Cref{thm:main_thm}}

\begin{proof}[\unskip\nopunct]
Let \(N = \langle x \rangle\) and \(n := |G'|\). We distinguish two cases.

\medskip\textit{Case \(1\): There exists a non-central \(g \in G\) such that \(C_{G}(g)\) is non-abelian}.
Then there exists \(u^* \in N\) such that \(C_{G}(u^*)\) is strictly larger than \(\NZ\) by \Cref{lemma:equivalencies}. We show that
the vertex \(u^*\) has distance at most two to every other vertex of \(\Gamma\).
That is, we prove that
\[C_{G}(g) \cap C_{G}(u^*) \neq Z(G) \quad \text{ for all } \quad g \in G.\]

If \([x, g]\) does not generate \(G'\), and has order \(s < n\), then
\[1 = [x, g]^s = [x^s, g]\]
so \(x^s\) and \(g\) commute. By \Cref{basic_facts}, \(x^s\) is not central, so \(x^s\)
is a non-central element that commutes with both \(g\) and \(u^*\).

Now suppose that \([x, g]\) generates \(G'\). By assumption, \(C_{G}(u^*)\)
is strictly larger than \(\NZ\). It follows from \Cref{basic_facts} that
\(|C_G(u*)| > |G'| |Z(G)|\) and \(|C_G(g)| = [G : G']\). Using this, we see that
\begin{align*}
    |C_{G}(g) \cap C_{G}(u^*)| = \frac{|C_{G}(g)| [G : G']}{|C_{G}(u^*) C_{G}(g)|}
    \geq \frac{C_{G}(u*)}{|G'|}
    > \frac{|\NZ|}{|G'|} = |Z(G)|
\end{align*}
so \(C_{G}(u^*) \cap C_{G}(g)\) contains at least one non-central element of \(G\).

This proves that every vertex of \(\Gamma\) has distance at most two to \(u^*\), so \(\Gamma\)
is connected and has diameter at most four.

\medskip\textit{Case \(2\): The centralizer of every non-central element of \(G\) is abelian}.
In this case, adjacency of two
vertices of \(\Gamma\) is a transitive relation. In other words, \(\Gamma\)
is a disjoint union of complete graphs and, in particular, it is disconnected.

Let \(g \in G\) be non-central. If \(g \in \NZ\), then \(C_{G}(g) = \NZ\). Therefore,
the connected component of \(g\) in \(\Gamma\) is a complete graph on \(|\NZ| - |Z(G)| =
|Z(G)|(|G'| - 1)\) vertices.
If \(g \notin \NZ\), then \Cref{lemma:equivalencies} asserts that \(|C_G(g)| = [G : G']\), so
the connected component of \(g\) has \([G : G'] - |Z(G)|\) vertices.

We conclude that \(\Gamma\) is the disjoint union of one complete graph on \(|Z(G)| (|G'| - 1)\)
vertices, and \(r\) complete graphs on \([G : G'] - |Z(G)|\), where
\[r = \frac{|\Gamma| - |Z(G)|(|G'| - 1)}{[G : G'] - |Z(G)|} = \frac{|G| - |Z(G)| |G'|}{\frac{|G|}{|G'|} - |Z(G)|} = |G'|.\qedhere\]
\end{proof}

\bibliographystyle{unsrtnat}

\end{document}